\newtheorem{prop}{Proposition}[section]
\newtheorem{thm}[prop]{Theorem}
\newtheorem{lem}[prop]{Lemma}
\newtheorem{cor}[prop]{Corollary}
\theoremstyle{definition}
\newtheorem{rem}[prop]{Remark}
\newtheorem*{ack}{Acknowledgement}
\def\co{\colon\thinspace}
\newcommand{\C}{\mathbb C}
\newcommand{\rmd}{\mathrm d}
\newcommand{\rme}{\mathrm e}
\newcommand{\rmi}{\mathrm i}
\newcommand{\N}{\mathbb N}
\newcommand{\R}{\mathbb R}
\newcommand{\lra}{\longrightarrow}
\newcommand{\ra}{\rightarrow}
\DeclareMathOperator{\Int}{\mathrm{Int}}
\DeclareMathOperator{\st}{\mathrm{st}}
\begin{document}

\author{Marc Kegel}
\address{Institut f\"ur Mathematik, Humboldt-Universit\"at zu Berlin,
Unter den Linden 6, D-10099 Berlin, Germany}
\email{kegemarc@math.hu-berlin.de}
\author{Jay Schneider}
\address{Zweitag GmbH, Alter Fischmarkt 12, D-48143 M\"unster, Germany}
\email{jay.schneider@zweitag.de}
\author{Kai Zehmisch}
\address{Mathematisches Institut, Justus-Liebig-Universit\"at Gie{\ss}en,
Arndtstra{\ss}e 2, D-35392 Gie{\ss}en, Germany}
\email{Kai.Zehmisch@math.uni-giessen.de}

\title[Symplectic dynamics and the 3-sphere]{Symplectic dynamics and the 3-sphere}

\date{13.03.2019}

\begin{abstract}
  Given a knot in a closed connected
  orientable $3$-manifold
  we prove that if the exterior of the knot
  admits an aperiodic contact form
  that is Euclidean near the boundary,
  then the $3$-manifold is diffeomorphic to the $3$-sphere
  and the knot is the unknot.
\end{abstract}

\subjclass[2010]{53D35; 37C27, 37J55, 57M25.}
\thanks{{\it Key words and phrases.}
Weinstein conjecture,
symplectic dynamics,
$3$-sphere,
aperiodic Reeb vector field,
contact embedding,
transverse knots,
diffeomorphism type}
\thanks{{\it Add in proof.}
We learned from the referee
that Vincent Colin has announced
the result in a talk on June 18, 2018.}

\maketitle


\section{Introduction\label{sec:intro}}

We consider compact contact manifolds with boundary and ask:
What does the boundary know about the interior?
The question can be answered meaningfully
in terms of the diffeomorphism type,
if one imposes boundary conditions on the contact structure
and if one only allows contact forms whose Reeb vector field
has a particular dynamical behaviour.

A contact form is called {\bf aperiodic}
if the associated Reeb vector field
does not have any periodic solution.
The existence of an aperiodic contact form
on a compact contact manifold with boundary
that is of a certain standard form near the boundary
implies strong restrictions
on the underlying topology.
If for instance the boundary of the contact manifold $M$
is $S^2=\partial M$
Eliashberg--Hofer \cite{eh94} proved a
{\bf global Darboux theorem}
via holomorphic disc filling:
If a neighbourhood of $\partial M$
contactomorphically is equal to a neighbourhood of
$S^2=\partial D^3$ in $D^3$
and if the standard contact form
\[
\alpha_{\st}:=\rmd z+\frac12\big(x\rmd y-y\rmd x\big)
\]
extends to an aperiodic contact form $\alpha$ on $M$
then $(M,\alpha)$ admits a strict contact embedding into 
$(\R^3,\alpha_{\st})$.
In particular,
by Alexander's theorem $M$ must be a ball
and the Reeb flow of $\alpha$
does not admit any trapped orbits.

The relation from symplectic dynamics
to the diffeomorphism type
is obtained via holomorphic disc filling.
This way odd-dimensional balls \cite{eh94,gz16b}
and neighbourhoods $T^d\times D^{2n+1-d}$, $d<n$,
of subcritical isotropic tori \cite{bschz}
have been characterised
in terms of aperiodic contact forms standard near the boundary.
In this article we reduce the case of
neighbourhoods $S^1\times D^2$ of transverse knots in a contact $3$-manifold
to the global Darboux theorem
due to Eliashberg--Hofer \cite{eh94}.
Furthermore we give a characterisation of the $3$-sphere in terms
of the symplectic dynamics of a class of Reeb vector
fields on transverse knot exteriors, see Theorem \ref{thm:diffeotos3}.


\section{Aperiodic exterior of a transverse knot\label{sec:aperextranskn}}

Let $(M,\xi)$ be a compact connected co-oriented contact $3$-manifold
whose boundary $\partial M$ is diffeomorphic to the $2$-torus.
Assume that $\partial M$ admits a contact embedding into $\R^3$
which is equipped with the standard contact structure
$\xi_{\st}$ given by the kernel of $\alpha_{\st}$.
By a contact embedding of $\partial M$
into $(\R^3,\xi_{\st})$
we mean the following:

\subsection{Definition}
\label{defofboundcon}

$\partial M$ admits a {\bf contact embedding} into $(\R^3,\xi_{\st})$
if there exists a co-orientation preserving
contact embedding $\varphi$
of a neighbourhood $U\subset(M,\xi)$ of $\partial M$ 
into $(\R^3,\xi_{\st})$ such that:
\begin{enumerate}
\item[(a)]
 The interior of $\varphi(U)$ is mapped into the
 bounded component of $\R^3\setminus\varphi(\partial M)$.
\item[(b)]
 Affine lines parallel to the $z$-axis
 intersect $\varphi(\partial M)$ in at most two points.
 In the case of exactly two points
 we require the intersections to be transverse;
 in the case of a single point the intersection has to be tangent
 and is called {\bf point of tangency}.
\item[(c)]
The points of tangency form a disjoint union of two embedded circles
that divide $\varphi(\partial M)$ into an {\bf upper} and a {\bf lower part}.
Additionally,
the tangency circles are
nowhere tangent to a line parallel to the $z$-axis.
\end{enumerate}

\begin{rem}
The existence of $\varphi$
and condition (a) allow a gluing construction
that we call Euclidean completion.
This is discussed below.
The condition (b) is of dynamical nature
and allows to prove Proposition \ref{prop:aperieukl} (i)
via the global Darboux theorem.
Similar conditions were used in \cite{bschz,eh94,gz16b}.
Conditions (c) delivers odd-symplectic models for $(M,\rmd\alpha)$
formulated in Proposition \ref{prop:aperieukl} (ii).
\end{rem}

Let $\alpha$ be a contact form on $M$
whose kernel defines $\xi$.
We call $\alpha$ {\bf standard near the boundary}
if the restriction of $\varphi$ to a possibly smaller neighbourhood
$U$ of $\partial M$ pulls $\alpha_{\st}$ back
to $\alpha|_U$.
We remark that
the space of contact forms on $(M,\xi)$ standard
near the boundary is not empty.
This is because the space of positive contact forms on $M$
that define the same co-oriented contact structure $\xi$
can be identified with the space of positive smooth functions on $M$.
This allows convex interpolation
of $\xi$-defining contact forms.

The {\bf Euclidean completion} $\widehat{\R^3}$ of $(M,\xi)$
is the result of gluing $M$ to the closure of
the unbounded component of
$\R^3\setminus\varphi(\partial M)$
via $\varphi$.
It admits a contact form $\hat{\alpha}$ given by
$\alpha$ on $M$ and $\alpha_{\st}$
on the unbounded component of $\R^3\setminus\varphi(\partial M)$.
The induced contact structure $\hat{\xi}$ on $\widehat{\R^3}$
is independent of the choice of $\alpha$.

\subsection{A global model}

Denote by $\R^2\times\{z_*\}$ the affine plane of $\R^3$
that touches $\varphi(\partial M)$ from below.
Let $A\equiv A\times\{z_*\}$ be the annulus in $\R^2\times\{z_*\}$
that is obtained by projecting $\varphi(\partial M)$
to $\R^2\times\{z_*\}$ along the $z$-axis.
Observe that $A$ is bounded by two embedded circles
inside $\R^2\times\{z_*\}$.

Denote by $V$ the solid torus
that is given by the closure of the bounded component of
$\R^3\setminus\varphi(\partial M)$.
We equip $V$ with the odd-symplectic form
$\rmd x\wedge\rmd y$.
For the notion of odd-symplectic forms
-- symplectic forms on odd-dimensional manifolds --
the reader is referred to \cite{gz18}.

\begin{prop}
 \label{prop:aperieukl}
 Let $(M,\xi)$ be a compact connected contact manifold
 with boundary $T^2$ that admits a contact embedding
 into $(\R^3,\xi_{\st})$ as described in Definition \ref{defofboundcon}.
 Let $\alpha$ be a contact form on $(M,\xi)$
 that is standard near the boundary.
 If $\alpha$ is aperiodic, then
 \begin{enumerate}
 \item[(i)]
 there exists a strict contact embedding of
 $(M,\alpha)$ into $(A\times\R,\alpha_{\st})$
 so that in particular $\xi$ is tight
 and $M$ is diffeomorphic to $S^1\times D^2$.
 \item[(ii)]
 $(M,\rmd\alpha)$ is odd-symplectomorphic to
 the solid torus $(V,\rmd x\wedge\rmd y)$ in $\R^3$.
 \end{enumerate}
\end{prop}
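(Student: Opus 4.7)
The plan is to reduce this statement to the Eliashberg--Hofer global Darboux theorem \cite{eh94}, extending their holomorphic disc filling from the $S^2$-boundary case to the present $T^2$-boundary case by exploiting conditions (b) and (c). The first step is to form the Euclidean completion $(\widehat{\R^3},\hat{\alpha})$ and to verify that the Reeb vector field $R_{\hat{\alpha}}$ is aperiodic on all of $\widehat{\R^3}$. In the unbounded region $R_{\hat{\alpha}}=\partial_z$, so its orbits there are vertical affine lines. By condition (b) each such line meets $\varphi(\partial M)$ in at most two points, transversely except along the tangency circles, and by condition (c) those circles are nowhere vertical. Hence any orbit passing through the unbounded region either avoids $M$ entirely or enters once through the lower part of $\partial M$, possibly exits once through the upper part, and then escapes to $z\to+\infty$; together with the aperiodicity of $\alpha$ inside $M$, this rules out closed orbits of $R_{\hat{\alpha}}$.

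Second, I would run a holomorphic disc filling in the spirit of \cite{eh94} to prove the following key dynamical statement: every Reeb trajectory of $\hat{\alpha}$ entering $M$ from below exits $M$ from above in finite time, and every point of $M$ lies on a unique such trajectory. Concretely, in the symplectization $\widehat{\R^3}\times\R$ one considers $J$-holomorphic discs whose boundaries lie on the lifts of two horizontal reference planes $\R^2\times\{z_\pm\}$ with $z_-\ll 0\ll z_+$; each such disc projects biholomorphically to a planar region in the $xy$-plane, and its interior threads a Reeb trajectory of $\hat{\alpha}$ through the region where $V$ has been replaced by $M$. Aperiodicity of $\hat{\alpha}$ excludes breaking onto closed Reeb orbits and the standard form at infinity provides $C^0$-control, so the relevant moduli space stays compact and the filling family sweeps out $M$ over the entire annulus $A$.

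Third, from the filling one extracts (i) and (ii) directly. Define $\Psi\co M\to A\times\R$ by $\Psi(p)=(a,\tau)$, where $a\in A$ is the $xy$-projection of the entry point of the Reeb trajectory of $\hat{\alpha}$ through $p$ and $\tau$ is the Reeb-flow time from that entry point to $p$. Because $\hat{\alpha}=\alpha_{\st}$ on a collar of $\partial M$ and $R_{\alpha_{\st}}=\partial_z$, the map $\Psi$ intertwines the Reeb flows and satisfies $\Psi^*\alpha_{\st}=\alpha$; uniqueness of the trajectory through each point makes $\Psi$ injective, hence the desired strict contact embedding. Tightness of $\xi$ follows because $\xi_{\st}$ is tight on $A\times\R\subset(\R^3,\xist)$, and the fibrewise structure of $\Psi$ gives $M\cong A\times[0,1]\cong S^1\times D^2$. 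For (ii), compose $\Psi$ with a fibrewise smooth rescaling $A\times\R\to V$ in the $z$-coordinate pinning $\Psi(\partial M)$ onto $\varphi(\partial M)$; since only $z$ is modified, $dx\wedge dy$ is preserved, and the composition provides an odd-symplectomorphism $(M,d\alpha)\to(V,dx\wedge dy)$ that agrees with $\varphi$ near $\partial M$.

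The main obstacle is the disc-filling in the second step. In \cite{eh94} the base of the filling is a disc and the family is simply connected; here the base is the annulus $A$ and one must assemble compatible local disc families, control possible monodromy along the non-trivial loop of $A$, and exclude the boundary degenerations of the moduli space, namely bubbling onto closed Reeb orbits (ruled out by aperiodicity) and breaking of discs along the tangency circles (controlled by condition (c)). Once the filling is produced, the remaining identifications in the extraction step are essentially tautological.
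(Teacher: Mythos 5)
Your overall scaffolding --- Euclidean completion, aperiodicity of $\hat{\alpha}$ via condition (b), a ``no trapped orbits'' statement, the flow-time map giving the strict contact embedding into $A\times\R$, and a $z$-fibrewise interpolation for the odd-symplectomorphism in (ii) --- matches the paper's. The gap is in your second step, which you yourself flag as ``the main obstacle'' and do not resolve: you propose to construct a new holomorphic disc filling adapted to the annulus $A$, and you leave open precisely the issues (non-simply-connected parameter space, monodromy, degenerations along the tangency circles) that would make such a filling genuinely hard; moreover the boundary condition ``discs whose boundaries lie on the lifts of two horizontal reference planes'' is not well posed for discs, whose boundary is a single circle. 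As written, the central dynamical input of the proof is missing.

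What you are overlooking --- and it is the whole point of the paper, announced in the introduction --- is that no new filling is needed. The Euclidean completion $\widehat{\R^3}$ is a boundaryless contact $3$-manifold that coincides with $(\R^3,\alpha_{\st})$ outside a compact set, irrespective of the fact that $\partial M$ is a torus rather than a sphere; hence Eliashberg--Hofer's global Darboux theorem \cite[Theorem 2]{eh94} applies verbatim and yields, from aperiodicity alone, that $\hat{R}$ has no trapped orbits in forward or backward time. The annulus enters only afterwards: since $\hat{R}=\partial_z$ near $\partial A\times\R$, the region $\widehat{A\times\R}$ is flow-invariant, and the explicit map $\psi(x,y,z)=\varphi_z(x,y,z_*)$ is then a diffeomorphism from $A\times\R$ onto $\widehat{A\times\R}$ with $\psi^*\hat{\alpha}=\alpha_{\st}$, which gives (i). Your extraction of (ii) by a $z$-rescaling matching the image of $\partial M$ to $\varphi(\partial M)$, using conditions (b)--(c) and the fact that maps moving only the $z$-coordinate preserve $\rmd x\wedge\rmd y$, is essentially the paper's argument. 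If you replace your second step by this direct application of the Eliashberg--Hofer theorem as a black box, the rest of your proposal goes through.
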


The proposition applies for example if $\varphi(\partial M)$
is a rotationally symmetric torus
that is obtained by rotating a round circle
in the $xz$-plane around the $z$-axis.
Furthermore by Alexander's theorem, each embedded
$2$-torus bounds a solid torus in $\R^3$ so that 
all $M$ are diffeomorphic to $S^1\times D^2$ that satisfy
conditions (a)-(b) in Definition \ref{defofboundcon}.

\begin{proof}[{\bf Proof of Proposition \ref{prop:aperieukl}}]
 We identify the annulus $A$ with its image in $\widehat{\R^3}$
 under the inclusion of the closure
 of the unbounded component of $\R^3\setminus\varphi(\partial M)$.
 Denoting the Reeb vector field
 of the contact form $\hat{\alpha}$ by $\hat{R}$
 we observe that $\hat{R}=\partial_z$
 in a neighbourhood of $\partial A\times\R$.
 In particular,
 the flow $\varphi_t$ of $\hat{R}$
 restricts to a global flow
 on the Euclidean completion $\widehat{A\times\R}$,
 which is defined similarly to $\widehat{\R^3}$.
 
 The contact form $\hat{\alpha}$
 on the Euclidean completion $\widehat{\R^3}$
 is aperiodic by condition (b) in Definition \ref{defofboundcon}.
 Therefore,
 the Reeb vector field $\hat{R}$ of $\hat{\alpha}$
 does not have any trapped orbits in
 forward and backward time.
 This is a consequence of Eliashberg--Hofer's
 global Darboux theorem \cite[Theorem 2]{eh94}.
 In fact,
 invoking the computations on 
 \cite[p.~1306]{eh94}
 the map $\psi(x,y,z)=\varphi_z(x,y,z_*)$
 defines a diffeomorphism from $A\times\R$
 onto $\widehat{A\times\R}$
 such that $\psi^*\hat{\alpha}=\alpha_{\st}$.
 Composing the strict contact embeddings
 $(M,\alpha)\subset(\widehat{A\times\R},\hat{\alpha})$ and
 $\psi^{-1}\co(\widehat{A\times\R},\hat{\alpha})\ra(A\times\R,\alpha_{\st})$
 shows (i).
 
 In order to show (ii) observe that
 the flow of any vector field $X=f\partial_z$ parallel to $\partial_z$
 is odd-symplectic,
 i.e. sends $\rmd\alpha_{\st}$ to itself,
 as $L_X\alpha_{\st}=\rmd f$.
 Using conditions (b)-(c) in Definition \ref{defofboundcon},
 the required odd-symplectomorphism is obtained
 by interpolating $\psi^{-1}(\partial M)$
 and $\varphi(\partial M)$ along the flow lines of $\partial_z$.
 For that observe
 that $\varphi(\partial M)$ and $A\times\R$
 as well as $\psi^{-1}(\partial M)$ and $A\times\R$
 intersect along two embedded circles
 that divide both tori into upper and lower parts.
 The lower parts admit neighbourhoods
 along which both tori coincide.
 The upper part of $\psi^{-1}(\partial M)$ can be brought 
 to the upper part of $\varphi(\partial M)$ by following $\partial_z$
 with speed equal to the oriented length of the line segments
 parallel to the $z$-axis in between.
 This can be done without moving the closure of the lower parts of the tori.
 The map
 $F\co M\subset\widehat{A\times\R}\ra V\subset A\times\R$
 obtained from $\psi^{-1}|_M$
 followed by the time-1 map of the diffeotopy
 induced by the interpolation is odd-symplectic as required.
\end{proof}

\begin{rem}
 The proof of Proposition \ref{prop:aperieukl} (ii) shows
 that $\varphi(\partial M)$ can be interpolated to any
 embedded $2$-torus in $\R^3$ that satisfies conditions
 (b)-(c) in Definition \ref{defofboundcon}
 so that the respective circles of tangency coincide
 and both tori are equal on neighbourhoods of them.
 Therefore,
 the enclosed solid tori equipped with $\rmd x\wedge\rmd y$
 and $(M,\rmd\alpha)$ are odd-symplectomorphic.
\end{rem}

\begin{rem}
 The Eliashberg--Hofer disc filling argument in \cite{eh94}
 only requires the non-existence of contractible periodic
 Reeb orbits for $\hat{\alpha}$.
 Therefore,
 it suffices to assume
 that $\alpha$ does not have any periodic Reeb orbit in $M$
 that is contractible or
 homotopic to one of the circles of tangency in $\partial M$.
\end{rem}

\begin{rem}
 \label{rem:etnyre-ghrist}
 If $(M,\xi)$
 is overtwisted and allows a Reeb vector field $R$
 that is tangent to $\partial M$,
 then $R$ admits a contractible periodic orbit
 contained in $M\setminus\partial M$,
 see \cite{eg02,hof93}.
 Part (i) of Proposition \ref{prop:aperieukl} yields the result
 with Euclidean boundary condition.
\end{rem}

\subsection{Tight contact structures on solid tori\label{sec:tightcontsoltor}}

 By construction $F$ fixes the closure of
 the lower part of
 $\partial M\equiv\varphi(\partial M)\equiv\partial V$ pointwise.
 Therefore,
 the restriction of $F$ to the boundary
 is a Dehn twist on the upper part
 up to isotopy
 and may not map the meridian to the {\bf meridian},
 which by definition is the non-trivial homology class
 that lies in the image of the connecting homomorphism
 for the pair $(M,\partial M)$ and $(V,\partial V)$, resp.
 The homology class in $\partial M$
 represented by a boundary circle of the lower part
 is chosen to be the {\bf longitude}.
 
 The method used to prove Proposition \ref{prop:aperieukl} (ii)
 does not yield a contactomorphism if $f$ is non-constant
 as the above Lie derivative $L_X\alpha_{\st}=\rmd f$ is closed
 and $\xi_{\st}$ non-integrable.
 On the other hand,
 Makar-Limanov proved a
 classification \cite[Theorem 5.4]{mak98}
 for tight contact structures on solid tori in the case
 the characteristic foliation on the boundary $2$-torus
 does not have any singular points and Reeb components.
 Applying this,
 $(M,\xi)$ is contactomorphic to $(V,\xi_{\st})$
 if the restriction of $F$ to the boundary
 is isotopic to the identity.


\section{A characterisation of the $3$-sphere\label{sec:achof3sphere}}

With Section \ref{sec:aperextranskn}
we can characterise the unknot and the $3$-sphere
in terms of symplectic dynamics of knot exteriors.
Before we start with this
we will give the analog construction
for the exterior of a point using symplectic dynamics.

Given a connected oriented compact $3$-manifold $M$
with boundary $S^2$ Martinet's theorem ensures existence
of a contact structure $\xi$ on $M$
such that a neighbourhood of $\partial M$ in $(M,\xi)$
can be identified with a neighbourhood of $S^2=\partial D^3$
in $(D^3,\xi_{\st})$
contactomorphically.
Indeed, the closed manifold $M'$
obtained by gluing $D^3$ to $M$ along $S^2$
carries a contact structure $\xi'$.
Darboux's theorem ensures the existence
of a disc-like neighbourhood $D\subset M'$
of a point such that $\xi'|_D=\xi_{\st}$.
The disc $D$ can be isotoped to a collar extension of $D^3$
inside $M'$ by the smooth disc theorem.
Restricting the isotoped contact structure $\xi'$ to $M$
yields $\xi$.
 
Assuming the existence of an aperiodic $\xi$-defining
contact form $\alpha$ that is Euclidean near $\partial M$
Eliashberg--Hofer \cite{eh94} showed that $(M,\rmd\alpha)$
is odd-symplectomorphic to $(D^3,\rmd x\wedge\rmd y)$.
The global Darboux theorem from \cite{eh94} yields a contact embedding
of $(M,\xi)$ into $(\R^3,\xi_{\st})$
so that $\xi$ in particular is tight.
Consequently,
by Eliashberg's classification of tight contact structures on $D^3$
with standard characteristic foliation on $\partial D^3=S^2$,
$(M,\xi)$ is contactomorphic to $(D^3,\xi_{\st})$,
cf. \cite[Theorem 4.10.1 (b)]{gei08}.
Finally,
using Smale--Munkres' $\Gamma_3=0$,
$M'$ is diffeomorphic to $S^3$.


\subsection{Euclidean model for transverse knots\label{sec:aeuklmod}}

We consider $S^3$ as subset of $\C^2$
given by $\{|z_1|^2+|z_2|^2=1\}$.
Denote the standard contact structure
$TS^3\cap\rmi TS^3$ by $\xi_{\st}$,
which can be defined as the kernel of
$r_1^2\rmd\theta_1+r_2^2\rmd\theta_2$.
The $2$-torus in $S^3$ given by $\{|z_1|^2=\tfrac12\}$
divides the $3$-sphere into two solid tori
$\{|z_1|^2\geq\tfrac12\}$
and $\{|z_2|^2\geq\tfrac12\}$.
The standard contact structure $\xi_{\st}$ on the model solid torus
$S^1\times D^2$ is given by
$\ker\big(\rmd\tau+\tfrac12r^2\rmd\theta\big)$.
Both contact manifolds are contactomorphic
in such a way that the transverse knot $S^1\times\{0\}$
corresponds to $S^3\cap\big(\C\times\{z_2=0\}\big)$.
Indeed, the map $S^1\times D^2_{\sqrt{2}}\ra S^3$
given by
\[
(\tau,z)\longmapsto
\frac{\rme^{\rmi\tau}}{\sqrt{2}}
\Big(\sqrt{2-|z|^2},z\Big)
\]
defines a strict contactomorphism
\[
\Big(S^1\times D^2,\rmd\tau+\tfrac12r^2\rmd\theta\Big)
\lra
\Big(
\{|z_1|^2\geq\tfrac12\}\cap S^3,
r_1^2\rmd\theta_1+r_2^2\rmd\theta_2
\Big)
\,.
\]
The contact stereographic projection used in \cite[Proposition 2.1.8]{gei08}
defines a {\bf positive}, i.e.\ (co-)orientation preserving,
contactomorphism from the $3$-sphere with the north pole $(0,1)\in\C^2$ deleted
to $\R^3$ w.r.t.\ the standard contact structures.
A composition of both contactomorphisms
yields a positive contact embedding
\[
(S^1\times D^2,\xi_{\st})\lra(\R^3,\xi_{\st})
\,,
\]
whose image we denote by $V$.
The transverse knot $S^1\times\{0\}$
in the Euclidean model corresponds to 
$\partial D^2\times\{0\}$ in $\R^2\times\R$.

\begin{lem}
 \label{bvsatisatoc}
  $\partial V$ satisfies the requirements (a)-(c) of Definition \ref{defofboundcon}.
\end{lem}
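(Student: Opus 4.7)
The plan is to identify $\partial V$ as a torus of revolution around the $z$-axis in $\R^3$, so that conditions (a)--(c) can be read off from a meridional cross-section in, say, the half-plane $\{y=0,\,x>0\}$. Condition~(a) will be automatic: since $V$ is the image of the compact solid torus $S^1\times D^2$ under the continuous composition of the two contactomorphisms, $V$ is compact in $\R^3$, so its interior lies in the bounded component of $\R^3\setminus\partial V$.

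To establish the rotational symmetry, I would exploit the $SO(2)$-action on $S^3$ given by $(z_1,z_2)\mapsto(\rme^{\rmi\alpha}z_1,z_2)$. This action preserves $\xi_{\st}$, has the Hopf circle $\{z_1=0\}\cap S^3$ as its fixed locus, and in particular fixes the deleted north pole $(0,1)$. The contact stereographic projection from \cite[Proposition 2.1.8]{gei08} is constructed equivariantly with respect to this $SO(2)$-action, identifying it with rotation of $(\R^3,\xi_{\st})$ around the $z$-axis, which is the image of $\{z_1=0\}\setminus\{(0,1)\}$. Because the Clifford torus $\{|z_1|^2=\tfrac12\}$ is $SO(2)$-invariant and coincides with the image of $\partial(S^1\times D^2)$ under the first map, its further image $\partial V$ is a torus of revolution around the $z$-axis.

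For conditions (b) and (c), I would parametrize the Clifford torus by $(\theta_1,\theta_2)$ with $z_j=\rme^{\rmi\theta_j}/\sqrt{2}$ and substitute into the Geiges formula to write its image as $\theta_2\mapsto(\rho(\theta_2)\cos\theta_1,\rho(\theta_2)\sin\theta_1,z(\theta_2))$. Condition~(b) then reduces to the claim that $\rho(\theta_2)$ has exactly two critical points, between which it is strictly monotone, so that each vertical line meets the meridional cross-section in at most two points and tangentially only at the extremes of $\rho$. Condition~(c) follows by verifying that at the two critical values of $\theta_2$ the $z$-coordinate vanishes, so that the tangency locus consists of two disjoint round horizontal circles in $\{z=0\}\cap\partial V$, which are automatically nowhere tangent to $\partial_z$ and separate $\partial V$ into an upper ($z>0$) and lower ($z<0$) part. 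The main obstacle is this last explicit trigonometric computation: after substituting the formula from \cite[Proposition 2.1.8]{gei08}, the monotonicity of $\rho(\theta_2)$ on each half-period has to be checked by direct estimation, which should reduce to elementary analysis.
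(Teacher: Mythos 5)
Your proposal is correct in substance and would yield the lemma, but it takes a genuinely different route from the paper. The paper likewise splits the contact stereographic projection into the classical stereographic projection followed by the correction map, but instead of parametrizing the image it invokes conformality: circles map to circles, so the Clifford torus $\{|z_1|^2=\tfrac12\}$ is sent to the torus of revolution $V'$ obtained by rotating the unit circle in the $xz$-plane centred at $(\sqrt2,0)$ about the $z$-axis, for which (b) and (c) are evident; it then observes that the correction map is a composition of fibrewise rotations about the axis (which preserve $V'$) and of dilations $t\mapsto p(|z|,t)\,t$ along vertical lines that are strictly increasing in $t$, hence preserve the number, transversality and tangency of intersections with vertical lines as well as the plane $\{t=0\}$ containing the two tangency circles. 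Your equivariance argument for rotational symmetry is valid, since both factors of the correction map commute with $z\mapsto\rme^{\rmi\alpha}z$ and the classical projection intertwines $(z_1,z_2)\mapsto(\rme^{\rmi\alpha}z_1,z_2)$ with rotation about the axis; and the computation you flag as the main obstacle is in fact harmless: the radial coordinate of the image of $(\rme^{\rmi\theta_1},\rme^{\rmi\theta_2})/\sqrt2$ is $\rho(\theta_2)=(\sqrt2-\sin\theta_2)^{-1}$ already after the classical projection and is unchanged by the correction map, so it has exactly the two critical points $\theta_2=\pm\pi/2$ with values $\sqrt2\pm1$ and is strictly monotone in between, while the height coordinate $p\cdot z(\theta_2)$ vanishes precisely where $\cos\theta_2=0$. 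Two small points you should make explicit: first, transversality versus tangency of a vertical line with the surface of revolution is read off from $\rho'(\theta_2)\neq0$ versus $\rho'(\theta_2)=0$ only once one notes that the profile curve is a regular embedded circle (so that at a critical point of $\rho$ the tangent of the profile is vertical); second, the upper and lower parts are characterised by the sign of the height coordinate only because $p>0$, i.e.\ the dilation does not move points across $\{t=0\}$. With these additions your argument is complete; the paper's version buys the same conclusion without any explicit formula for the profile, at the price of quoting conformality of stereographic projection.
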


\begin{proof}
The contact stereographic projection
in \cite[Proposition 2.1.8]{gei08}
is a composition of the classical
stereographic projection
$(z_1,z_2)\mapsto(1-y_2)^{-1}(x_1,y_1,x_2)$
and a correction map
\[
\big(r\rme^{\rmi\theta},t\big)
\longmapsto
\Big(r\rme^{\rmi(\theta-t)},\tfrac12\big(1+\tfrac13t^2+r^2\big)t\Big)
\]
w.r.t.\ the splitting $\C\times\R$,
cf.\ the first formula on \cite[p.~57]{gei08}.
The classical stereographic projection maps the solid torus
$\{|z_1|^2\geq\tfrac12\}\cap S^3$
onto the rotationally invariant solid torus $V'$
obtained by rotating the closed unit disc
in the $xz$-plane with centre $(\sqrt2,0)$
around the $z$-axis.
This can be seen using conformal properties of the stereographic projection.
The correction map is a composition
of the graph-map of rotations
$(z,t)\mapsto(\rme^{-\rmi t}z,t)$,
that leave the solid torus $V'$ invariant,
and dilations
$(z,t)\mapsto\big(z,p(|z|,t)\cdot t\big)$
along lines parallel to the $\R$-axis
letting $p(r,t)=\frac12\big(1+\tfrac13t^2+r^2\big)$.
Therefore,
parallel lines to the $\R$-axis
through the annulus
$A=\{\sqrt2-1\leq|z|\leq\sqrt2+1\}\times\{0\}$
in $\C\times\R$ intersect the $2$-torus $\partial V$
either transversally in precisely two points
or in a unique point of tangency along the circles
$\{\sqrt2-1=|z|\}$ and $\{\sqrt2+1=|z|\}$.
As the projection of $V$ onto $\C$ along $\R$
equals $A$ there are no other intersection points.
\end{proof}


\subsection{Exterior of a transverse knot\label{sec:exoftrknot}}

We consider a closed connected
contact $3$-manifold $(M',\xi')$
and a transverse knot $K\subset(M',\xi')$.
By the tubular neighbourhood theorem
there exist $r>0$
and a contact embedding
$(S^1\times B_r(0),\xi_{\st})\ra(M',\xi')$
that maps $S^1\times\{0\}$ onto $K$,
see \cite{gei08}.
Moreover, for any given $N\in\N$
the diffeomorphism 
\[
(\tau,z)\longmapsto
\left(\tau,\frac{\rme^{-N\rmi\tau}z}{\sqrt{1+\tfrac{N}{2}|z|^2}}\right)
\]
shrinks $S^1\times\R^2$ onto $S^1\times B_{2/N}(0)$
contactomorphically w.r.t.\ $\xi_{\st}$,
see \cite{eli91,ekp06}.
Composing both maps suitably,
we obtain a contact embedding
\[
f\co(S^1\times\R^2,\xi_{\st})\lra(M',\xi')
\]
that is positive and sends $S^1\times\{0\}$ to $K$.
The image of the restriction of $f$ to $S^1\times D^2$
is the tubular neighbourhood
\[
\nu K=f(S^1\times D^2)\subset M'\,,
\]
which is unique up to smooth isotopies.
The {\bf exterior} of the transverse knot $K\subset(M',\xi')$
(more precisely the exterior of the embedding $f$)
is defined to be
\[
M:=M'\setminus\Int(\nu K)\,.
\]
We equip $M$ with the contact structure $\xi:=\xi'|_M$
and provide $\partial M$ with the boundary orientation
so that $\partial(\nu K)=-\partial M$.

As in the \cite[Appendix]{bschz}
we obtain local contact inversions near transverse knots:

\begin{prop}
\label{loccontinv}
 $(M',\xi')$ admits a {\bf local contact inversion}
 along $\partial M$,
 i.e.\ there exists a positive contactomorphism
 defined near $\partial M\subset M'$ that preserves $\partial M$
 reversing the orientation.
\end{prop}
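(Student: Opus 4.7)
The plan is to build the local contact inversion from an explicit global involution on $(S^3,\xist)$ and then transport it to $M'$ through the tubular neighbourhood model of $K$ provided by $f$.

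First I would use the linear swap
\[
\sigma\co S^3\ra S^3,\qquad(z_1,z_2)\longmapsto(z_2,z_1).
\]
As a real linear involution of $\C^2$ with determinant $+1$ on $\R^4$, it is an orientation preserving involution of $S^3$, and the identity $\sigma^*(r_1^2\rmd\theta_1+r_2^2\rmd\theta_2)=r_1^2\rmd\theta_1+r_2^2\rmd\theta_2$ shows that $\sigma$ is moreover a strict positive contactomorphism of $(S^3,\xist)$. It fixes the Clifford torus $T=\{|z_1|^2=|z_2|^2=\tfrac12\}$ setwise while interchanging the two solid tori bounded by $T$; in particular $\sigma|_T$ reverses the orientation of the boundary of either of them.

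Second, I would extend the strict contactomorphism $\Psi$ of Section~\ref{sec:aeuklmod} by the same formula from $S^1\times D^2$ to $S^1\times\{|z|<\sqrt2\}$, where it becomes an injective strict contactomorphism onto $S^3\setminus\{z_1=0\}$ that carries $S^1\times\partial D^2$ onto $T$. Picking a $\sigma$-invariant open neighbourhood $U$ of $T$ inside $S^3\setminus(\{z_1=0\}\cup\{z_2=0\})$---any neighbourhood of $T$ may be replaced by its intersection with its $\sigma$-image---the conjugate
\[
\tilde\sigma:=\Psi^{-1}\circ\sigma\circ\Psi
\]
becomes a positive contactomorphic involution of the neighbourhood $\Psi^{-1}(U)$ of $S^1\times\partial D^2$ in $S^1\times\R^2$, preserving that torus. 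A direct computation in the natural $(\tau,\theta)$-coordinates gives $\tilde\sigma|_{S^1\times\partial D^2}(\tau,\theta)=(\tau+\theta,-\theta)$, whose Jacobian determinant equals $-1$, so the orientation of $S^1\times\partial D^2$ is indeed reversed.

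Finally, I would transport $\tilde\sigma$ to $M'$ by means of the contact embedding $f$ of Section~\ref{sec:exoftrknot}: the open set $f(\Psi^{-1}(U))\subset M'$ is a neighbourhood of $f(S^1\times\partial D^2)=\partial M$, and $f\circ\tilde\sigma\circ f^{-1}$ is the sought positive contactomorphism preserving $\partial M$ while reversing its orientation. The main conceptual step is identifying the $S^3$-model in which the swap $\sigma$ provides a natural symmetry exchanging the two sides of $\partial M$---a feature not visible from the tubular neighbourhood picture alone; once this is at hand, the only nontrivial verification is the orientation-reversal just carried out, whereas smoothness, positivity, the contact condition and the involutive property all follow automatically from the conjugation.
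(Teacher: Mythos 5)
Your proposal is correct and follows essentially the same route as the paper: the paper also takes the swap involution $\iota(z_1,z_2)=(z_2,z_1)$ on $(S^3,\xi_{\st})$, which interchanges the two solid tori bounded by the Clifford torus, and pulls it back through the tubular neighbourhood contactomorphism of the transverse knot model. Your version merely spells out the explicit coordinate check $(\tau,\theta)\mapsto(\tau+\theta,-\theta)$ for the orientation reversal on the boundary torus, which the paper leaves implicit.
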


\begin{proof}
 The germ of the local contact involution is obtained
 from the involution
 $\iota(z_1,z_2)=(z_2,z_1)$ on $S^3\subset\C^2$,
 which positively preserves $\xi_{\st}$ and interchanges
 the solid tori
 $\{|z_1|^2\geq\tfrac12\}\cap S^3$ and
 $\{|z_2|^2\geq\tfrac12\}\cap S^3$.
 By the tubular neighbourhood theorem
 as formulated at the beginning of Section \ref{sec:exoftrknot}
 and the transverse model neighbourhood
 inside $(S^3,\xi_{\st})$ as considered in
 Section \ref{sec:aeuklmod}
 (with $z_1$ and $z_2$ interchanged)
 we obtain a positive contactomorphism
 \[
 \chi\co\big(\nu K,\xi'\big)\lra
 \Big(\{|z_2|^2\geq\tfrac12\}\cap S^3,\xi_{\st}\Big)
 \]
 from (a neighbourhood of) $\nu K$ onto (a
 neighbourhood of) $\{|z_2|^2\geq\tfrac12\}\cap S^3$
 w.r.t.\ $\xi'$ and $\xi_{\st}$ that mapps $K$
 onto $\{z_1=0\}\cap S^3$.
 The claim follows by pulling back $\iota$ along $\chi$.
\end{proof}

Composing $\chi$ with the contact stereographic projection
as in Section \ref{sec:aeuklmod} we obtain with Lemma \ref{bvsatisatoc}:

\begin{cor}
 The boundary $\partial M\subset (M,\xi)$ 
 of any knot exterior admits a
 contact embedding into $(\R^3,\xi_{\st})$.
\end{cor}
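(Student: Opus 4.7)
The plan is to produce the required contact embedding of a neighbourhood of $\partial M\subset M$ into $(\R^3,\xist)$ as the composition of the contactomorphism $\chi$ from the proof of Proposition \ref{loccontinv}, restricted to the exterior side of $\partial M$, with the contact stereographic projection appearing in Section \ref{sec:aeuklmod}.

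First I would observe that $\chi$ is constructed on a full (open) neighbourhood of $\nu K$ in $M'$, so it makes sense to restrict it to a one-sided neighbourhood $U$ of $\partial M$ inside the knot exterior $M$. Since $\chi$ sends $\nu K$ onto $\{|z_2|^2\ge\tfrac12\}\cap S^3$, the restriction $\chi|_U$ lands on the opposite side of the Clifford torus, i.e.\ in the region $\{|z_1|^2>\tfrac12\}\cap S^3$. Post-composing with the positive contact embedding of Section \ref{sec:aeuklmod} -- namely the strict contactomorphism $(S^1\times D^2,\xist)\to(\{|z_1|^2\ge\tfrac12\}\cap S^3,\xist)$ followed by the contact stereographic projection to $\R^3$, whose image is the solid torus $V$ -- is well-defined on $\chi(U)$ because the north pole $(0,1)$ lies in the interior of $\{|z_2|^2\ge\tfrac12\}$ and hence well away from $\chi(U)$. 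This yields a positive contact embedding $\varphi\co U\to(\R^3,\xist)$ with $\varphi(\partial M)=\partial V$.

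It then remains to verify the three requirements of Definition \ref{defofboundcon} for $\varphi$. Conditions (b) and (c) depend only on the image torus $\partial V=\varphi(\partial M)$ and are exactly the content of Lemma \ref{bvsatisatoc}, so they come for free. Condition (a) is the step requiring the most care: one must know that the interior of $U$ maps to the bounded component $\Int V$ of $\R^3\setminus\partial V$ rather than to the unbounded component. This is precisely what the $z_1\leftrightarrow z_2$ swap built into the construction of $\chi$ accomplishes: because $\chi$ sends $\nu K$ to the $z_2$-solid torus $\{|z_2|^2\ge\tfrac12\}\cap S^3$, it sends the exterior side of $\partial M$ into the $z_1$-solid torus $\{|z_1|^2\ge\tfrac12\}\cap S^3$, whose stereographic projection is by definition $V$. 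Chasing the composition, $\chi(\Int U)\subset\{|z_1|^2>\tfrac12\}\cap S^3$ stereographically projects onto $\Int V$, as required.
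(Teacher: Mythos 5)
Your proposal is correct and follows exactly the route the paper indicates: restrict $\chi$ to a one-sided collar of $\partial M$ in the exterior, post-compose with the contact stereographic projection, and invoke Lemma \ref{bvsatisatoc} for conditions (b)--(c), with condition (a) following from the fact that $\chi$ sends the exterior side of $\partial M$ into $\{|z_1|^2>\tfrac12\}\cap S^3$ and hence into $\Int V$. You have merely spelled out details the paper leaves implicit in its one-sentence derivation.
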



\subsection{The unknot in $S^3$\label{sec:theunknots3}}

Let $K$ be a knot in a closed connected oriented $3$-manifold $M'$.
By Martinet's theorem $M'$ admits infinitely many
positively co-oriented contact structures $\xi'$
such that $K$ is a transverse knot,
cf.\ \cite[Theorem 4.1.2]{gei08}.
In other words,
any knot appears as a transverse knot
for a certain contact structure.
On the other hand the following theorem
characterises the unknot in $S^3$ uniquely
in terms of symplectic dynamics on the knot exterior:

\begin{thm}
 \label{thm:diffeotos3}
 Let $K$ be a transverse knot
 in a closed connected co-oriented contact $3$-manifold $(M',\xi')$.
 We assume that
 the knot exterior $(M,\xi)$ of $K\subset (M',\xi')$
 admits an aperiodic $\xi$-defining contact form $\alpha$
 such that $\alpha$ is Euclidian near the boundary $\partial M$.
 Then
 $(M',K)$ is diffeomorphic to $(S^3,\{z_1=0\})$
 with orientations preserved
 so that $K$ (in particular) is the unknot.
\end{thm}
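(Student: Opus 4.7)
The plan is to apply Proposition \ref{prop:aperieukl} to the knot exterior $(M,\xi)$ and identify the resulting genus-$1$ Heegaard decomposition $M' = M \cup_{\partial M} \nu K$ as $S^3$, using the local contact inversion $\chi$ of Proposition \ref{loccontinv} on the $\nu K$ side.

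By the corollary preceding the theorem, $\partial M$ admits a contact embedding into $(\R^3, \xist)$ satisfying conditions (a)--(c) of Definition \ref{defofboundcon}. With $\alpha$ aperiodic and Euclidean near $\partial M$, Proposition \ref{prop:aperieukl} applies: part (i) gives $M \cong S^1 \times D^2$ with $\xi$ tight, and part (ii) gives an odd-symplectic diffeomorphism $F \co M \to V$ onto the Euclidean model solid torus $V \subset \R^3$ from Section \ref{sec:aeuklmod}. Since $\nu K \cong S^1 \times D^2$ by the transverse tubular neighbourhood theorem, $M' = M \cup_{T^2} \nu K$ is a genus-$1$ Heegaard splitting, and so is $S^3$, $S^2 \times S^1$, or a lens space.

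To single out $M' \cong S^3$, I would check that the meridian of $\nu K$ intersects the meridian of $M$ in exactly one point on the common boundary torus. Proposition \ref{loccontinv} provides a contactomorphism $\chi \co (\nu K, \xi') \to (\{|z_2|^2 \geq \tfrac12\} \cap S^3, \xist)$ sending $K$ to $\{z_1 = 0\} \cap S^3$; composing with contact stereographic projection embeds $\nu K$ into $\R^3$ with the same boundary torus $\partial V$ as the image of $M$ under $F$, up to the Dehn twist discrepancy noted in Section \ref{sec:tightcontsoltor}. Crucially, this Dehn twist takes place along the tangency circles $\{|z| = \sqrt{2} \pm 1\}$ computed in the proof of Lemma \ref{bvsatisatoc}; these are parallel to the core $\{|z| = \sqrt{2}\}$ of $V$ and hence longitudinal, so twists along them preserve the longitude class. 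Under these identifications, the meridian of $\nu K$ corresponds to a longitude of $V$, while the meridian of $M$ corresponds to the meridian of $V$; their algebraic intersection number on $\partial V$ is therefore $\pm 1$, and $M' \cong S^3$. Since $K$ is the core of $\nu K$, it is sent by $\chi$ to $\{z_1 = 0\} \cap S^3$, the standard unknot, and the assembled diffeomorphism $(M', K) \to (S^3, \{z_1 = 0\})$ is orientation-preserving because both $F$ (as an odd-symplectomorphism) and $\chi$ (as a positive contactomorphism) are.

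The hard part is the rigorous bookkeeping of the Dehn twist discrepancy in $F|_{\partial M}$ and its interaction with the meridian of $\nu K$, in particular pinning down the homology class of the tangency circles and verifying the claimed longitudinal invariance. A conceptually cleaner route, which would simultaneously upgrade the output to a contactomorphism, would be to strengthen Proposition \ref{prop:aperieukl}(ii) to a contactomorphism $(M, \xi) \cong (V, \xist)$ via the Makar--Limanov classification cited in Section \ref{sec:tightcontsoltor}; combined with $\chi$, this would immediately yield the desired (orientation-preserving) diffeomorphism $M' \to S^3$ carrying $K$ to the unknot.
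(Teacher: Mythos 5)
Your proposal is correct in outline and shares the paper's skeleton (Corollary of Lemma \ref{bvsatisatoc} $\Rightarrow$ Proposition \ref{prop:aperieukl} on $M$, the inversion $\chi$ of Proposition \ref{loccontinv} on $\nu K$, then deal with the Dehn twist discrepancy of $F|_{\partial M}$), but the two arguments diverge at the last step, and the divergence is exactly at the point you flag as ``the hard part.'' The paper's resolution is an observation that makes the bookkeeping essentially trivial: the twist curve $\ell$ (a tangency circle, i.e.\ a longitude of $V$) is precisely the \emph{meridian} of the complementary solid torus $\{|z_2|^2\geq\tfrac12\}\cap S^3\equiv\chi(\nu K)$, and a $k$-fold Dehn twist along the meridian of a solid torus extends to a diffeomorphism $\Delta$ of that solid torus preserving its core. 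Hence $F$ and $\Delta\circ\chi$ agree on $\partial M$ and glue to an orientation-preserving diffeomorphism of pairs $(M',K)\to(S^3,\{z_1=0\})$ in one stroke. Your route via the classification of genus-one Heegaard splittings also works, and for the same reason: since $\mu_{\nu K}=\ell$ under the $\chi$-identification while $\mu_M$ differs from $\mu_V$ only by a multiple of $\ell$, one has $\mu_M\cdot\mu_{\nu K}=(\pm\mu_V+k\ell)\cdot\ell=\pm1$ independently of $k$, so no further pinning down of $k$ is needed. What your version still owes is the pair statement: the ``assembled diffeomorphism $(M',K)\to(S^3,\{z_1=0\})$'' is not actually assembled, because $F$ and $\chi$ do not match on $\partial M$; you either need the paper's extension trick, or the separate (standard, but to be cited) fact that the core of a solid torus in a genus-one Heegaard splitting of $S^3$ is the unknot, e.g.\ because it is isotopic in $\nu K$ to $\partial D$ for a meridian disc $D$ of the other handlebody.

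One caution about your proposed ``cleaner route'': upgrading Proposition \ref{prop:aperieukl}\,(ii) to a contactomorphism $(M,\xi)\cong(V,\xi_{\st})$ via Makar--Limanov is not available unconditionally. As Section \ref{sec:tightcontsoltor} points out, that classification identifies $(M,\xi)$ with $(V,\xi_{\st})$ only when $F|_{\partial M}$ is isotopic to the identity, i.e.\ when $k=0$; the Dehn twist ambiguity does not disappear by passing to contact topology, which is precisely why the paper discharges it smoothly over $\nu K$ instead.
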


\begin{proof}
 The positive contactomorphism $\chi$
 obtained in the proof of Proposition \ref{loccontinv}
 embeds a collar neighbourhood
 $\big((-\varepsilon,0]\times\partial M,\xi\big)$
 into
 $\big(\{|z_1|^2\geq\tfrac12\}\cap S^3,\xi_{\st}\big)$
 contactomorphically
 (mapping $\partial M\equiv\{0\}\times\partial M$
 onto $\{|z_1|^2=\tfrac12\}\cap S^3$),
 which by the contact stereographic projection
 is contactomorphic to the solid torus $V\subset(\R^3,\xi_{\st})$,
 see Section \ref{sec:aeuklmod}.
 In view of Lemma \ref{bvsatisatoc},
 Proposition \ref{prop:aperieukl}
 and Section \ref{sec:tightcontsoltor} there exists an
 odd-symplectomorphism
 $F\co(M,\rmd\alpha)\ra(V,\rmd x\wedge\rmd y)$
 that fixes a longitudinal curve $\ell$ in $\partial M=T^2$.
 We choose a meridional curve by intersecting
 $\partial V$ with the positive half $xz$-plane $x>0$.
 Therefore,
 the restriction $F|_{\partial M\equiv\partial V}$
 is a $k$-fold Dehn twist along $\ell$ up to isotopy.
 Identifying $V$ with
 $\{|z_1|^2\geq\tfrac12\}\cap S^3$
 using contact stereographic projection,
 the map $F|_{\partial M\equiv\partial V}$
 extends to a diffeomorphism $\Delta$
 of the complement
 $\{|z_2|^2\geq\tfrac12\}\cap S^3$
 preserving the unknot $\{z_1=0\}\cap S^3$.
 Consequently,
 the diffeomorphisms $F$ and $\Delta\circ\chi$
 glue to an orientation preserving diffeomorphism $M'\ra S^3$.
\end{proof}


\begin{ack}
  This research is part of a project in the SFB/TRR 191
  {\it Symplectic Structures in Geometry, Algebra and Dynamics},
  funded by the DFG.
\end{ack}


\end{document}